\newtheorem{theorem}{Theorem}
\newtheorem{proposition}[theorem]{Proposition}
\theoremstyle{definition}
\newtheorem{example}[theorem]{Example}
\theoremstyle{remark}
\numberwithin{equation}{section}
\newcommand{\R}{{\mathbb R}}
\newcommand {\hide}[1]{}
\begin{document}
    \title[On a tropical version of the Jacobian conjecture]
    {On a tropical version of the Jacobian conjecture}
    {
        \let\thefootnote\relax\footnote{2010 Mathematics Subject Classification 14T05}
    }
    \author{Dima Grigoriev}
    \address{CNRS, Math\'ematiques, Universit\'e de Lille, Villeneuve d'Ascq, 59655, France}
    \email{dmitry.grigoryev@univ-lille.fr}
    \author{Danylo Radchenko}
    \address{Max Planck Institute for Mathematics, Bonn, Germany}
    \email{danradchenko@gmail.com}
    
    \begin{abstract}
        We prove for a tropical rational map that if for any point the convex hull of Jacobian matrices at smooth points in a neighborhood of the point does not contain singular matrices then the map is an isomorphism. We also show that a tropical polynomial map on the plane is an isomorphism if all the Jacobians have the same sign (positive or negative). In addition, for a tropical rational map we prove that if the Jacobians have the same sign and if its preimage is a singleton at least at one regular point then the map is an isomorphism.
    \end{abstract}
    
    \maketitle
    
    \section*{Introduction}
    
    We provide two criteria for a tropical map of $n$-dimensional real spaces to be an isomorphism. The basic concepts of tropical (or min-plus) mathematics one can find in \cite{MS}. The first criterion concerns tropical rational maps and states that if at any point the convex hull of Jacobian matrices at smooth points in a neighborhood of the point does not contain singular matrices then the map is an isomorphism (Proposition~\ref{local} and Theorem~\ref{global}). Note that there is totally a finite number of Jacobian matrices corresponding to polyhedra on which the tropical rational map is linear. We also show that this criterion is not necessary (even for 2-dimensional tropical polynomial isomorphisms).
    
    The second criterion deals with tropical polynomial maps and holds for 2-dimensional real spaces. It claims that if the Jacobians at all the smooth points have the same sign (either positive or negative) then the tropical polynomial map is an isomorphism (Theorem~\ref{plane}). We construct an example which demonstrates that this criterion fails for tropical rational 2-dimensional maps as well as for tropical polynomial 3-dimensional ones. In addition, for a tropical rational map we prove that if the Jacobians at all the smooth points have the same sign and if its preimage is a singleton at least at one regular point then the map is an isomorphism (Theorem~\ref{general}).
    
    \section*{Weak version of the tropical Jacobian conjecture}
    By a tropical algebraic rational function (in $n$ variables) we mean a function from $\R^n$ to $\R$ of the form $\min\{A_1,\dots, A_k\}-\min\{B_1,\dots, B_l\}$ where $A_1,\dots,A_k$, $B_1,\dots,B_l$ are linear functions with rational coefficients (when the coefficients at the variables are positive integers one talks about tropical rational functions). They play the role of a tropical analog of algebraic functions (cf. \cite{G}). In fact, one could stick with more generally, real coefficients.
    
    From a different point of view, defining a tropical algebraic rational 
    function amounts to giving a finite decomposition $\R^n=\bigcup_i C_i$,
    where each $C_i$ is a closed $n$-dimensional polyhedron and the interiors 
    of $C_i$ and $C_j$ do not intersect if $i\ne j$, and specifying a 
    linear function $f_i$ on each $C_i$ in such a way that $f_i$ and $f_j$ 
    agree on $C_i\cap C_j$. Similar description applies in general to 
    tropical algebraic rational mappings from $\R^n$ to $\R^m$. We will call 
    $(f_i,C_i)$ or just $f_i$ the linear pieces of $f$.
    
    Observe that if a tropical algebraic rational map $\R^n \to \R^n$ is a bijection then its inverse is also a tropical algebraic rational map. In this case we call such a map an isomorphism.
    
    \begin{proposition}\label{local}
        If a tropical algebraic rational map $f\colon\R^n\to\R^n$
        is locally injective, then it is an isomorphism. 
    \end{proposition}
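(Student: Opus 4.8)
The plan is to upgrade local injectivity to the statement that $f$ is a \emph{proper local homeomorphism} of $\R^n$, and then to exploit the simple connectivity of $\R^n$ to force $f$ to be a bijection. The conclusion that $f$ is an isomorphism is then immediate from the observation preceding the statement, that a bijective tropical algebraic rational map has a tropical algebraic rational inverse.

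First I would check that $f$ is a local homeomorphism. Since $f$ is continuous and locally injective, invariance of domain shows that each point has an open neighborhood carried homeomorphically onto an open subset of $\R^n$; in particular $f$ is an open map. Next, on each linear piece $(f_i,C_i)$ the map coincides with an affine map $f_i(x)=L_i x + b_i$. Choosing an interior point of the $n$-dimensional polyhedron $C_i$ and using injectivity of $f$ on a neighborhood of it forces the linear part $L_i$ to be nonsingular. This is the only place where the affine-linear structure really enters, and it is exactly what controls the behavior of $f$ at infinity.

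I would then establish that $f$ is proper, i.e. that $\norm{f(x)}\to\infty$ as $\norm{x}\to\infty$. Here the finiteness of the decomposition $\R^n=\bigcup_i C_i$ is decisive: on each piece one has $\norm{f_i(x)}\ge \norm{L_i^{-1}}^{-1}\norm{x}-\norm{b_i}$ with $\norm{L_i^{-1}}^{-1}>0$, since $L_i$ is nonsingular, so for every threshold $M$ there is a radius $R_i$ beyond which $\norm{f}>M$ on $C_i$. Taking the maximum of the finitely many $R_i$ yields a single radius past which $\norm{f}>M$ everywhere, which is properness.

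Finally, a proper local homeomorphism onto a connected, locally compact, locally connected space is a finite-sheeted covering map; and since $f$ is both open (being a local homeomorphism) and closed (being proper), its image is open and closed, hence equal to $\R^n$. Thus $f$ is a finite covering of $\R^n$ whose total space $\R^n$ is connected, and because $\R^n$ is simply connected every such covering is trivial, so the number of sheets is $1$ and $f$ is a bijection. Invoking the inverse-map observation, $f$ is an isomorphism. The step carrying the genuine content—and the main obstacle—is properness: without the nonsingularity of each $L_i$ together with the finiteness of the pieces, a locally injective piecewise-linear map could fail to be proper, and the covering-space argument would collapse.
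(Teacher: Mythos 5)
Your proof is correct and follows essentially the same route as the paper: invariance of domain gives that $f$ is a local homeomorphism, nonsingularity of the finitely many linear pieces gives properness, and a proper local homeomorphism of $\R^n$ is a global homeomorphism. The only difference is that the paper cites Ho's theorem for this last implication, whereas you supply its standard covering-space proof (and the paper phrases properness as compactness of $f^{-1}(K)\subseteq\bigcup_i f_i^{-1}(K)$ rather than via your norm estimate); both are sound.
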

    \begin{proof}
        Let $f_1,\dots,f_N$ be the linear pieces of $f$.
        Since $f$ is locally injective, each $f_i$ has to be invertible, and hence
        for any compact $K\subset \R^n$ we have that $f_i^{-1}(K)$ is compact.
        Therefore, $f^{-1}(K) \subseteq \bigcup_{i=1}^{N}f_i^{-1}(K)$ is compact,
        and thus $f$ is a proper mapping. By the domain 
        invariance theorem~\cite{B} the mapping~$f$ is locally a homeomorphism. 
        The result then follows directly from~\cite[Th.~2]{H} that, specialized to 
        our setting, says that if a continuous mapping $f\colon\R^n\to\R^n$ is a 
        local homeomorphism and is proper, then it is a global homeomorphism.
    \end{proof}
    This result can be treated as a weak tropical version of the 
    Jacobian conjecture in the sense that for tropical rational
    algebraic maps local invertibility implies global invertibility.
    The problem remains how to tell if a given map is locally
    an isomorphism.
    There is a simple sufficient condition for local 
    invertibility of Lipschitz mappings that can be easily stated 
    in our case (it is not hard to show 
    that tropical rational algebraic maps are Lipschitz).
    Let $f\colon\R^n\to\R^n$ be a tropical algebraic rational mapping with 
    linear pieces $(f_i,C_i)$, $i=1\dots,N$.
    For any point $x\in\R^n$ let $I=\{i\,|\, x\in C_i\}$. 
    We then define $\partial_f(x)$ to be the convex hull 
    of the set of differentials (Jacobian matrices)
    of $f_i$, $i\in I$ (in the space of matrices ${\rm Mat}_{n\times n}(\R)$). 
    The following is a special case of the inverse function theorem for Lipschitz 
    mappings proved by Clarke~\cite[Th.~1]{C}.
    \begin{theorem}\label{global}
        If $f\colon\R^n\to\R^n$ is a tropical algebraic rational mapping 
        and the set $\partial_f(x_0)$ contains only nonsingular 
        matrices, then $f$ is a homeomorphism in a neighborhood of $x_0$.
    \end{theorem}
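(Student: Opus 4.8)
The plan is to reduce the statement directly to Clarke's inverse function theorem for Lipschitz maps~\cite{C}, whose \emph{generalized Jacobian} we will identify with the set $\partial_f(x_0)$ defined above. Clarke's theorem asserts that if $f$ is Lipschitz near $x_0$ and the generalized Jacobian $\partial f(x_0)$---the convex hull of all limits $\lim_k Df(x_k)$ taken over sequences $x_k\to x_0$ of points at which $f$ is differentiable---contains only nonsingular matrices, then $f$ restricts to a homeomorphism (in fact a bi-Lipschitz map) between a neighborhood of $x_0$ and a neighborhood of $f(x_0)$. So it suffices to check the two hypotheses: that $f$ is Lipschitz, and that Clarke's generalized Jacobian coincides with $\partial_f(x_0)$.

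First I would verify that $f$ is (globally) Lipschitz. This is the step flagged earlier as routine: $f$ is continuous---the pieces $f_i$ agree on the overlaps $C_i\cap C_j$---and on each closed $n$-dimensional polyhedron $C_i$ it is affine with differential $Df_i$. A standard argument for continuous piecewise affine maps with finitely many pieces then shows that $f$ is Lipschitz with constant $\max_i \norm{Df_i}$.

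The heart of the matter is the identification of Clarke's generalized Jacobian with $\partial_f(x_0)$. By Rademacher's theorem $f$ is differentiable almost everywhere, and in particular on the interior of each $C_i$, where $Df=Df_i$. Since the finitely many pieces $C_j$ not containing $x_0$ are closed and miss $x_0$, they lie at positive distance from it, so one may choose a neighborhood $U$ of $x_0$ met only by those $C_i$ with $x_0\in C_i$; then $Df(x)\in\set{Df_i\mid x_0\in C_i}$ at every differentiability point $x\in U$. Conversely, each $n$-dimensional polyhedron $C_i$ equals the closure of its interior, so whenever $x_0\in C_i$ there are interior points $x_k\to x_0$ with $Df(x_k)=Df_i$, and $Df_i$ is a genuine limiting Jacobian. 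Hence the set of limits $\lim_k Df(x_k)$ is exactly $\set{Df_i\mid x_0\in C_i}$, and its convex hull---Clarke's $\partial f(x_0)$---coincides with $\partial_f(x_0)$.

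With these two facts in hand the theorem is immediate: the hypothesis says $\partial_f(x_0)=\partial f(x_0)$ contains only nonsingular matrices, so Clarke's theorem yields that $f$ is a homeomorphism in a neighborhood of $x_0$. I expect the only real obstacle to be the bookkeeping in the previous paragraph---making sure that no spurious Jacobians from distant pieces enter (handled by shrinking $U$) and that none of the $Df_i$ with $x_0\in C_i$ are missed (handled by the full-dimensionality of the pieces); once the two generalized Jacobians are shown to agree, nothing else is needed.
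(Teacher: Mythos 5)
Your proposal is correct and follows exactly the route the paper takes: the paper proves Theorem~\ref{global} simply by declaring it a special case of Clarke's inverse function theorem for Lipschitz maps, after noting that tropical rational algebraic maps are Lipschitz. You merely fill in the (correct) details of why $\partial_f(x_0)$ coincides with Clarke's generalized Jacobian, which the paper leaves implicit.
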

    
    Note, however, that the sufficient condition in 
    Theorem~\ref{global} for $f$ to be an isomorphism 
    is not necessary, as the following example shows.
    
    \begin{example}
        We construct a tropical polynomial isomorphism 
        $f\colon \R^2\to \R^2$ as a composition of two tropical 
        polynomial maps (isomorphisms): a lower-triangular map
        $$(x,y)\mapsto (x,\, y+\min\{\alpha x,\, \beta x\}),\, \alpha < \beta$$
        \noindent and an upper-triangular one:
        $$(x,y)\mapsto (x+\min\{ay,\, by\},\, y),\, a<b.$$
        
        Then $f(x,y)=$
        $$f_1:=(x+a(y+\alpha x),\, y+\alpha x) \quad \mbox{if} \quad x>0,\, y+\alpha x >0;$$
        $$f_2:=(x+b(y+\alpha x),\, y+\alpha x) \quad \mbox{if} \quad x>0,\, y+\alpha x <0;$$
        $$f_3:=(x+a(y+\beta x),\, y+\beta x) \quad \mbox{if} \quad x<0,\, y+\beta x >0;$$
        $$f_4:=(x+b(y+\beta x),\, y+\beta x) \quad \mbox{if} \quad x<0,\, y+\beta x <0.$$
        
        The corresponding matrices of differentials at the origin are as follows:
        $$\left(\begin{array}{cc} 1+a\alpha & a \\ \alpha & 1 \end{array} \right), \, \left(\begin{array}{cc} 1+b\alpha & b \\ \alpha & 1 \end{array} \right),\, \left(\begin{array}{cc} 1+a\beta & a \\ \beta & 1 \end{array} \right),\, \left(\begin{array}{cc} 1+b\beta & b \\ \beta & 1 \end{array} \right).$$
        \noindent The sum of the second and the third matrices is singular when $(\beta-\alpha)(b-a)=4$ (in particular, one can put $\beta=b=2,\, \alpha=a=0$). Thus, $\partial_f(0,0)$ contains a singular matrix.
    \end{example}
    
    
    \section*{Strong version of the tropical Jacobian conjecture}
    Denote by $J_i,\, 1\le i\le N$ the determinants (Jacobians) 
    of the differentials of~$f_i$. 
    If~$f$ is an isomorphism then all $J_i$ must have the same sign 
    (either positive or negative). This follows from a general result about 
    homeomorphisms (see, for example,~\cite[Th.~5.22]{HK}) 
    or can be verified directly by comparing the Jacobians
    $J_i,\, J_j$ for a pair of $f_i,\, f_j$ defined on adjacent 
    ($n$-dimensional) polyhedra separated by a common $(n-1)$-dimensional facet.
    Observe that the condition on $\partial _f(p)$ in 
    Theorem~\ref{global} implies that all the Jacobians 
    have the same sign since for a pair of real matrices $A,\, B$ 
    with $\det(AB)<0$ there exists $0<t<1$ such that $\det(tA+(1-t)B)=0$.
    
    The condition that all $J_i$ are nonzero and have the same sign 
    is not sufficient to guarantee that $f$ is 
    an isomorphism (see Example~\ref{space} below),
    but we will now show that it is almost sufficient.
    In analogy to the case of smooth maps, we call $y\in\R^n$
    a regular value of $f$, if it does not belong to the set
    $\bigcup_{i=1}^N f(\partial C_i)$. A version of Sard's theorem
    is true (and is trivial) in this case: 
    the set of regular values is dense in $\R^n$.
    \begin{theorem}\label{general}
        A necessary and sufficient condition for a tropical algebraic 
        rational mapping $f\colon\R^n\to\R^n$ to be an isomorphism
        is that all Jacobians $J_i$ have the same sign, and 
        that $|f^{-1}(y_0)|=1$ for at least one regular value $y_0$.
    \end{theorem}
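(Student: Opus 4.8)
The necessity is immediate: if $f$ is an isomorphism then it is in particular a homeomorphism, so as recalled above all $J_i$ share a common sign, and being a bijection every fiber $f^{-1}(y)$ is a singleton, in particular at the regular value $y_0$. The content is therefore the sufficiency, which I would prove by a degree-theoretic counting argument. First I would record that $f$ is proper: since every $J_i\neq 0$, each affine piece $f_i$ is invertible, so $f_i^{-1}(K)$ is compact for compact $K$, and exactly as in the proof of Proposition~\ref{local} the set $f^{-1}(K)\subseteq\bigcup_i f_i^{-1}(K)$ is a closed subset of a compact set, hence compact. Thus $f$ extends continuously to the one-point compactification $S^n$ and has a well-defined Brouwer degree $d$. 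After composing with a coordinate reflection if necessary, I may assume the common sign $\s(J_i)=+1$ for all $i$.

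The plan is then to evaluate $d$ fiberwise. For a regular value $y$ the fiber $f^{-1}(y)$ is finite and lies in the open cells $\mathrm{int}\,C_i$, where $f$ coincides with the local homeomorphism $f_i$ of local degree $\s(J_i)=+1$; hence $d=\sum_{x\in f^{-1}(y)}\s(J_{i(x)})=\abs{f^{-1}(y)}$ for \emph{every} regular value $y$. Applying this to the distinguished value gives $d=\abs{f^{-1}(y_0)}=1$, and feeding $d=1$ back into the identity yields $\abs{f^{-1}(y)}=1$ for every regular value. In particular $f$ is surjective: any $y\notin f(\R^n)$ would be a regular value with empty fiber, forcing $d=0$, a contradiction (alternatively, the image is closed by properness and contains the dense set of regular values).

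It remains to prove injectivity on all of $\R^n$, and this is the step I expect to be the main obstacle. The key local input is that an orientation-preserving piecewise-linear map is \emph{open}, i.e.\ for every $p$ the image $f(U)$ of each neighborhood $U$ contains a neighborhood of $f(p)$; equivalently the local degree of $f$ at $p$ is at least $1$. I would establish this by passing to the tangent-cone map $F$ at $p$, the positively homogeneous piecewise-linear map whose pieces are the linear parts $L_i$ of the $f_i$ on the cones $C_i-p$. This $F$ is again proper and maps each full-dimensional cone $C_i-p$ isomorphically onto the full-dimensional cone $L_i(C_i-p)$, so any regular value $y$ lying in the interior of some $L_i(C_i-p)$ is attained, and, all pieces being orientation-preserving, contributes $\abs{F^{-1}(y)}\ge 1$ to the local degree. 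Granting openness, suppose $f(p)=f(q)=z$ with $p\neq q$; choosing disjoint neighborhoods $U\ni p$, $V\ni q$ and a regular value $y$ close enough to $z$ to lie in both $f(U)$ and $f(V)$ produces two distinct points of $f^{-1}(y)$, contradicting $\abs{f^{-1}(y)}=1$. Hence $f$ is injective.

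Putting the pieces together, $f$ is a proper continuous bijection, hence a homeomorphism, and its inverse is again a tropical algebraic rational map by the observation made before Proposition~\ref{local}; thus $f$ is an isomorphism. The only nonroutine point is the openness (equivalently, local-degree) lemma for orientation-preserving piecewise-linear maps; everything else is bookkeeping with the Brouwer degree of a proper map together with the fact that same-sign Jacobians make all local contributions positive. If the direct cone argument for openness proves awkward, a fallback is an induction on $n$ (the case $n=1$ being the elementary statement that a continuous piecewise-linear function with all slopes positive is strictly increasing), or a reduction to the lower-semicontinuity of the local mapping degree.
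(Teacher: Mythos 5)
Your proposal follows essentially the same route as the paper's proof: properness gives a well-defined global degree, the regular value $y_0$ forces $\deg(f)=1$ and hence $|f^{-1}(y)|=1$ for every regular value, and injectivity is reduced to openness, which is itself settled by a local-degree argument with all contributions positive. The paper proves openness by taking a small ball $U$ with $f^{-1}(f(x))\cap U=\{x\}$ (using $|f^{-1}(y)|\le N$), locating a regular value of $f$ inside $f(U)$ near $f(x)$, and invoking constancy of $\deg(f,U,\cdot)$ on components of $\R^n\smallsetminus f(\partial U)$ (following \cite{R}); your tangent-cone formulation is a repackaging of that same step, so there is no substantive difference.
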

    \begin{proof}
        The necessity is clear. To prove sufficiency we will use 
        standard techniques from topological degree theory
        (for a general reference, see~\cite[Ch.~IV]{OR}). 
        
        Without loss of generality we can assume that all 
        Jacobians are positive. 
        Since $f$ is proper, it has a well-defined degree $\deg(f)$
        (as a mapping from the one-point compactification of $\R^n$
        to itself), and by computing it using the preimage of the 
        regular value $y_0$ we get $\deg(f)=1$. 
        This, in turn, implies that $|f^{-1}(y)|=1$ holds for any 
        regular value $y$. The proof would then be complete if we could 
        show that $f$ is an open mapping: indeed, if $f(x_1)=f(x_2)$
        for some $x_1\ne x_2$, then $|f^{-1}(y)|\ge 2$ for all $y$ 
        in a sufficiently small neighborhood of $f(x_1)$, which would 
        contradict the above computation for regular~$y$.
        
        To show that $f$ is open we use an argument similar to the one used 
        in \cite[Proof of Thm. 3]{R}.
        Note that $|f^{-1}(y)|\le N$ for all $y$, where $N$ is the
        number of linear pieces of~$f$. Therefore, for any $x\in\R^n$
        we can find a small neighborhood $U$ of $x$ such that
        $f^{-1}(f(x))\cap U = \{x\}$. Let $V$ be a small open ball
        around $f(x)$ that does not intersect~$f(\partial U)$.
        Then for some smooth point $x_0\in U\cap f^{-1}(V)$ we have
        that $f(x_0)$ is a regular value, and hence 
        $\deg(f,U,f(x_0))=1$. Since $\deg(f,U,p)$ only depends on
        the connected component of $\R^n\smallsetminus f(\partial U)$ 
        to which $p$ belongs, we get that $\deg(f,U,y)=1$ for 
        all $y\in V$. But if $\deg(f,U,y)\ne 0$, then $y\in f(U)$
        (see~\cite[Cor.~IV.2.5(3)]{OR}), so that $V\subset f(U)$.
        Thus $f$ is an open mapping.
    \end{proof}
    Let us remark that this result gives a simple computational
    approach to checking that a given tropical rational algebraic map $f$ 
    is an isomorphism. Indeed, to check the above conditions one only needs 
    to compute the set of linear functions $f_i$ and the preimage 
    of a generic point~$y_0$, but both of these tasks are easy 
    to do using linear programming.    
    
    It turns out that if $n=2$ and $f$ is a tropical algebraic mapping, 
    so that each coordinate of $f$ is concave and piecewise linear,
    then a stronger result is true.   
    \begin{theorem}\label{plane}
        If $f\colon\R^2\to\R^2$ is a tropical algebraic mapping 
        (i.e., $f=(\phi_1,\, \phi_2)$ where $\phi_i$
        are tropical algebraic functions) such that all the 
        Jacobians $J_i$ have the same sign, then $f$ is an isomorphism.
    \end{theorem}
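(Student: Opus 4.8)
The plan is to reduce the statement to Theorem~\ref{general}: the hypothesis already gives that all Jacobians $J_i$ have the same sign, so it suffices to exhibit a single regular value $y_0$ with $|f^{-1}(y_0)|=1$. After possibly composing with a reflection of the target, I assume every $J_i>0$. Note first that on each linear piece both gradients $\nabla\phi_1$ and $\nabla\phi_2$ are nonzero, since otherwise the corresponding $2\times 2$ differential would have a zero row and $J_i$ would vanish. The key planar input, which I would isolate as a lemma, is the following monotonicity claim.

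\emph{Claim.} For every $c_1$ strictly between $\inf\phi_1$ and $\sup\phi_1$, the function $\phi_2$ is strictly monotone along the level set $\Gamma_{c_1}=\{\phi_1=c_1\}$. To see this I would use concavity twice. Because $\phi_1$ is concave, $K=\{\phi_1\ge c_1\}$ is convex, and since $c_1$ lies strictly inside the range, $K$ is a proper convex set with nonempty interior; hence $\Gamma_{c_1}=\partial K$ is a connected simple $1$-manifold with no branch points. Orient $\Gamma_{c_1}$ so that $K$ lies on the left and let $s$ be arclength with unit tangent $\tau$. In the interior of each piece $C_i$ the inward normal to $\Gamma_{c_1}$ is $\nabla\phi_1$, so $\tau$ is a fixed $90^\circ$ rotation of $\nabla\phi_1/|\nabla\phi_1|$, and a direct computation gives
$$\frac{d}{ds}\phi_2=\langle\nabla\phi_2,\tau\rangle=-\frac{J_i}{|\nabla\phi_1|}.$$
Since all $J_i>0$ and the orientation of the connected curve $\Gamma_{c_1}$ is global, this quantity has one and the same (negative) sign on every piece; together with continuity of $\phi_2$ across the finitely many break points, $\phi_2$ is strictly decreasing along $\Gamma_{c_1}$. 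In particular $\Gamma_{c_1}$ cannot be a circle, since no continuous function is strictly monotone on $S^1$; thus $\Gamma_{c_1}\cong\R$ and $\phi_2$ is injective on it.

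Granting the claim, the conclusion follows quickly. Choose a smooth point $x$ (interior of some $C_i$); since $J_i\ne 0$, the map $f$ is a local homeomorphism near $x$, so the image of $f$ contains an open set $W$. The first coordinate of points of $W$ ranges over an open interval, which must contain values in $(\inf\phi_1,\sup\phi_1)$, and regular values are dense, so I may pick a regular value $y_0=(c_1,c_2)\in W$ with $c_1\in(\inf\phi_1,\sup\phi_1)$. Any two preimages of $y_0$ then lie on $\Gamma_{c_1}$ and share the value $\phi_2=c_2$, so by the claim they coincide; as $y_0\in W$ is attained, $|f^{-1}(y_0)|=1$. Theorem~\ref{general} now gives that $f$ is an isomorphism. (Alternatively, the same argument applied to every attained $c_1$ shows $f$ is globally injective, and Proposition~\ref{local} applies directly.)

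I expect the elementary local computation to be the easy part; the genuine obstacle is the global/topological step inside the claim. The essential use of $n=2$ and of concavity is precisely there: concavity forces each level set to be the boundary of a convex region, hence a single simple curve rather than a branching $1$-complex, and the planar $2\times 2$ determinant identity converts the common sign of the $J_i$ into a constant sign for the derivative of $\phi_2$ along that curve. The care needed is in justifying the simple-curve structure of $\Gamma_{c_1}$, in excluding the compact (circle) case, and in checking that a regular value with $c_1$ strictly interior to the range can indeed be selected in the image.
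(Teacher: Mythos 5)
Your local computation and the reduction to Theorem~\ref{general} (or, in the alternative, to Proposition~\ref{local}) are fine, and the first half of your argument is essentially the paper's: concavity makes $K=\{\phi_1\ge c_1\}$ convex, and the $2\times2$ determinant identity turns the common sign of the $J_i$ into strict monotonicity of $\phi_2$ along $\partial K$. But there is a genuine gap at the step ``$K$ is a proper convex set with nonempty interior, hence $\Gamma_{c_1}=\partial K$ is a connected simple $1$-manifold.'' That implication is false in the plane: for $\phi_1=\min(x,-x)=-|x|$ and $c_1=-1$ (which lies strictly between $\inf\phi_1=-\infty$ and $\sup\phi_1=0$), the set $K$ is the slab $\{|x|\le 1\}$ and its boundary is two parallel lines. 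In that situation your Claim only gives that $\phi_2$ is strictly monotone on each component, which does not exclude two preimages of $y_0$ sitting on different components, so injectivity does not follow. Nor can you escape by choosing a better $c_1$: if one superlevel set is a slab, then every linear form $A_{1,q}$ entering $\phi_1=\min_q A_{1,q}$ must be constant in the direction of the slab (otherwise its half-plane could not contain a boundary line), so $\phi_1$ factors through a single linear functional, attains an interior maximum, and \emph{all} superlevel sets below that maximum are slabs.

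This disconnected case is precisely where the paper spends the second half of its proof, and it is where the hypothesis of same-sign Jacobians must be used a second time: along the line $L$ where $\phi_1$ attains its maximum, the gradients of $\phi_1$ on the two adjacent pieces are antiparallel (or one vanishes), so either the two Jacobians there have opposite signs, or (if the corner locus of $\phi_2$ also contains $L$, or if $\phi_1$ is maximal on a two-dimensional facet) some Jacobian vanishes --- a contradiction either way. You need to supply this argument to close the proof. It is worth noting that this is not a removable technicality: for the non-concave map $g(x,y)=(|x|-|y|,\,|x+y|-|x-y|)$ from the paper's Example~\ref{space} all Jacobians equal $2$ yet $g$ is $2$-to-$1$, so the connectedness of level sets is exactly the point where concavity and the sign hypothesis must interact.
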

    
    \begin{proof} 
    For $a\in \R$ such that the preimage $\phi_1^{-1}(a)\subset \R^2$ 
    is non-empty, it is the boundary of the convex 
    polygon $P:=\{x\, |\, \phi_1(x)\ge a\}$. Consider the 
    supporting to $P$ lines $L_1,\dots,L_m$. Either $m=2$ 
    and $L_1,\, L_2$ are parallel or one can renumber 
    $L_1,\dots,L_m$ to make their slopes in the 
    clock-wise order (with respect to an arbitrary point of $P$). 
    
    In the latter case $\phi_1^{-1}(a)$ is a connected polygonal 
    line with the consecutive edges (some of them, perhaps, 
    unbounded) lying on $L_1,\dots,L_m$, respectively. 
    Due to the condition imposed on the Jacobians $J_i$ 
    in Theorem~\ref{plane}, $\phi_2$ is strictly monotone 
    along $\phi_1^{-1}(a)$. Therefore, $\phi_1^{-1}(a)$ 
    has to be unbounded with just two unbounded edges 
    $L_1,\, L_m$, and $\phi_2$ restricted to $\phi_1^{-1}(a)$ 
    (being a piecewise linear function with a finite number
    of pieces) provides an isomorphism with $\R$. Thus, 
    we conclude that $f$ is an isomorphism if $\phi_1^{-1}(a)$ 
    is connected (or empty) for all $a\in \R$ by
    Proposition~\ref{local}. By symmetry, $f$ is also 
    an isomorphism if $\phi_2^{-1}(a)$ is connected 
    (or empty) for all $a\in \R$.
    
    Now we study the case when $m=2$ and $L_1,\, L_2$ are parallel. 
    Let $\phi_i=:\min\{A_{i,1},\dots, A_{i,s_i}\}$, $1\le i\le 2$.
    Then each linear function $A_{1,q},\, 1\le q\le s_1$ is 
    constant on $L_1$. Hence the graph $\Phi_1\subset \R^3$ 
    of $\phi_1$ is the Cartesian product of $L_1$ and a 
    convex polygonal line $Q\subset \R^2$ which can be 
    treated as the graph of a univariate concave piecewise 
    linear function. The latter function is bounded from 
    above since~$\Phi_1$ contains $L_1,\, L_2$. 
    Therefore, $\phi_1$ attains its maximum on a 
    line $L$ parallel to $L_1,\, L_2$. 
    
    Thus, it remains to consider the case when for 
    both $\phi_1,\, \phi_2$ there exist $a_1,\, a_2\in \R$ 
    such that $\phi_1^{-1}(a_1),\, \phi_2^{-1}(a_2)$ 
    are disconnected, so they consist of pairs of 
    parallel lines. So, $L$ separates two adjacent 
    pieces on which $\phi_1$ is linear with their differentials 
    being collinear with a negative coefficient of collinearity. 
    If at some point $y$ of $L$ the piecewise linear 
    function $\phi_2$ is non-singular (thereby, is 
    linear in a neighborhood of $y$) then (the only) 
    two linear maps $f_i,\, f_j$ defining $f$ in a 
    neighborhood of $y$ have the Jacobians $J_i,\, J_j$ 
    with opposite signs contrary to the condition of 
    Theorem~\ref{plane}.
    
    Otherwise, if $\phi_2$ has an edge on $L$  separating 
    two pieces on which $\phi_2$ is linear, then for 
    two linear maps $f_i,\, f_j$ which define $f$ in 
    a neighborhood of any point of this edge their 
    Jacobians $J_i,\, J_j$ both vanish.
    Obviously, $\phi_1$ can't attain its maximum on 
    a $2$-dimensional facet since the differential 
    of $\phi_1$ would vanish on it. We get a contradiction 
    with the supposition that there exist $a_1,\, a_2\in \R$ 
    such that $\phi_1^{-1}(a_1),\, \phi_2^{-1}(a_2)$ are disconnected.
    \end{proof}
    The simple condition of Theorem~\ref{plane}
    is no longer sufficient for $n\ge 3$, as the following example shows.
    \begin{example}
        \label{space}
        We will construct a tropical polynomial mapping $f\colon\R^n\to\R^n$
        , $n\ge 3$, for which the Jacobians $J_i > 0$ for all $i$, 
        but which is not an isomorphism.
        
        Clearly, it is enough to construct such an example for $n=3$.
        We start with a two-dimensional tropical rational 
        mapping $g\colon\R^2\to\R^2$ defined by
        \[(x,y) \quad\mapsto\quad (|x|-|y|,\; |x+y|-|x-y|).\]
        Note that the Jacobian of each linear piece of $g$ is equal to $2$,
        but $g(x,y)=g(-x,-y)$, so that $g$ is not injective. To construct
        $f$ we start with the mapping that sends $(x,y,z)$ to $(g(x,y),z)$
        and then apply elementary piecewise linear transformations to get rid 
        of all the subtractions in the min-plus form. 
        More precisely, let us define $h\colon\R^3\to\R^3$~by
        \[(x,y,z) \quad\mapsto\quad (-|y|-|x+y|+z,\; -|x|-|x-y|+z,\; -|x+y|-|x|+z).\]
        One can check that $h$ is a tropical Laurent polynomial map 
        (since $-|x|=\min(-x,x)$ is), 
        its Jacobian is equal to $2$ at every smooth point, 
        and $h(x,y,z)=h(-x,-y,z)$. To get the mapping $f$, we 
        compose $h$ with $z\mapsto z+2x+2y$ to obtain
        \begin{align*}
        f(x,y,z) = \;
        &(\min(2y,0)+\min(2x+2y,0)+z+x,\\
        &\min(2x,0)+\min(2x,2y)+z+y,\\
        &\min(2x,0)+\min(2x+2y,0)+z+y).
        \end{align*}
        Then $f$ is a tropical polynomial map whose Jacobian
        at every smooth point is equal to $2$, but it is not an isomorphism
        since $f(-x,-y,z+4x+4y)=f(x,y,z)$.
    \end{example}
    
    \vspace{2mm}
    
    {\bf Acknowledgements.} The first author is grateful to ANR/DFG grant 
    
    \noindent SYMBIONT. The second author is grateful to Max Planck Institute
    for Mathematics in Bonn for its hospitality and financial support.

\end{document}